\newcounter{spec}
{\end{list}}
\renewcommand{\P}{{\mathbf P}}
\newcommand{\Z}{{\mathbb Z}}
\newcommand{\Q}{{\mathbb Q}}
\newcommand{\C}{{\mathbb C}}
\newcommand{\oi}{\hskip1mm {\buildrel \simeq \over \rightarrow} \hskip1mm}
\newcommand{\Br}{{\operatorname{Br}}}
\newcommand{\Ker}{{\operatorname{Ker}}}
\newcommand{\Div}{{\operatorname{Div}}}
\newcommand{\Hom}{{\operatorname{Hom}}}
\newcommand{\by}[1]{\overset{#1}{\longrightarrow}}
\newcommand{\iso}{\by{\sim}}
\renewcommand{\lim}{\varprojlim}
\numberwithin{equation}{section}
\newfont{\gothic}{eufb10}
\renewcommand{\qed}{{\hfill$\square$}}
\newtheorem{theo}{Th\'{e}or\`{e}me}[section]
\newtheorem{prop}[theo]{Proposition}
\newtheorem{lem}[theo]{Lemme}
\newtheorem{cor}[theo]{Corollaire}
\theoremstyle{definition}
\newtheorem{defi}[theo]{D\'efinition}
\theoremstyle{remark}
\newtheorem{rema}[theo]{Remarque}
\newcommand{\bthe}{\begin{theo}}
\newcommand{\ble}{\begin{lem}}
\newcommand{\bpr}{\begin{prop}}
\newcommand{\bco}{\begin{cor}}
\newcommand{\bde}{\begin{defi}}
\newcommand{\ethe}{\end{theo}}
\newcommand{\ele}{\end{lem}}
\newcommand{\epr}{\end{prop}}
\newcommand{\eco}{\end{cor}}
\newcommand{\ede}{\end{defi}}
\newcommand{\Pic}{\operatorname{Pic}}
\def\ov{\overline}
\def\Gal{{\rm Gal}}
\def\Z{{\bf Z}}
\def\Q{{\bf Q}}
\def\R{{\bf R}}
\def\C{{\bf C}}
\def\k{{\overline k}}
\DeclareFontFamily{U}{wncy}{}
\DeclareFontShape{U}{wncy}{m}{n}{%
<5>wncyr5%
<6>wncyr6%
<7>wncyr7%
<8>wncyr8%
<9>wncyr9%
<10>wncyr10%
<11>wncyr10%
<12>wncyr6%
<14>wncyr7%
<17>wncyr8%
<20>wncyr10%
<25>wncyr10}{}
\DeclareMathAlphabet{\cyr}{U}{wncy}{m}{n}
\begin{document}

\title[Brauer non ramifi\'e]{Groupe de Brauer non ramifi\'e de quotients par un groupe fini}
\author{J.-L. Colliot-Th\'el\`ene}
 \address{C.N.R.S., Universit\'e Paris Sud\\Math\'ematiques, B\^atiment 425\\91405 Orsay Cedex\\France}
 \email{jlct@math.u-psud.fr}
\date{9 janvier 2012}
\maketitle

 \begin{abstract}
 Soit $k$ un corps, $G$ un groupe fini, $G \hookrightarrow SL_{n,k}$ un plongement.
 Pour $k$ alg\' ebriquement clos, Bogomolov a donn\' e une formule pour le groupe
 de Brauer non ramifi\'e du quotient $SL_{n,k}/G$.  On examine ce que donne sa m\'ethode
 sur un corps $k$ quelconque (de caract\'eristique nulle). Par cette m\'ethode purement alg\'ebrique,
 on retrouve et \'etend  des r\'esultats obtenus  par Harari et par Demarche 
 au moyen de m\' ethodes arithm\'etiques, comme la trivialit\'e du groupe de Brauer non ramifi\'e
  pour $k=\Q$
 et $G$ d'ordre impair.
  \end{abstract}

 \begin{altabstract} Let $k$ be a field, $G$ a finite group, $G \hookrightarrow SL_{n,k}$ an embedding.
 For $k$ an algebraically closed field, Bogomolov gave a formula for the unramified Brauer group of the 
 quotient $SL_{n,k}/G$. We develop his method over any  characteristic zero field.
 This purely algebraic method enables us to recover and generalize results of Harari and of Demarche
 over number fields, such as the triviality of the unramified Brauer group for $k=\Q$
 and $G$ of odd order.
  \end{altabstract}

  \section{Introduction} 

 Soient $k$ un corps de caract\'eristique  z\'ero, $\ov{k}$ une cl\^oture alg\'ebrique de $k$,
et $\frak{g}=\Gal(\ov{k}/k)$.
\`A toute $k$-vari\'et\'e alg\'ebrique g\'eom\'etriquement int\`egre $W$,
de corps des fonctions $k(W)$,  on associe d'une part son 
groupe de Brauer cohomologique $\Br(W)$, d'autre part le groupe
de Brauer non ramifi\'e $\Br_{nr}(k(W)/k)$ qu'on notera
ici simplement $\Br_{nr}(k(W))$. 
Pour la d\'efinition et les propri\'et\'es de base de ces groupes,
on renvoie \`a \cite{grot},   \cite{ctsb} et \cite{cts}. Rappelons simplement que si $W$ est   lisse sur $k$, alors la restriction au point g\'en\'erique induit
 une inclusion 
$$ \Br(W) \hookrightarrow  \Br(k(W))$$
et que si $W$ est projective et lisse, alors cette inclusion induit un isomorphisme
$$ \Br(W) \iso  \Br_{nr}(k(W)).$$

 Soit   $n\geq 2$ un entier, $X=SL_{n,k}$ et $G$ un groupe fini vu comme $k$-groupe 
 constant, et $G  \subset SL_{n,k}$ un $k$-plongement.
Notons $Y$ l'espace homog\`ene quotient $X/G$. Le corps des fonctions $k(Y)$ de $Y$
est le corps $k(X)^G$ des invariants du corps des fonctions $k(X)$ sous l'action de~$G$.

Ces notations seront conserv\'ees dans tout l'article.

On se propose d'examiner le groupe de Brauer non ramifi\'e $\Br_{nr}(k(Y))$.
Ce groupe 
(\`a isomorphisme non unique pr\`es)
 ne d\'epend que de $G$, il ne d\'epend
ni de $n$ ni du plongement
$G \subset SL_{n,k}$.

Comme $Y$ est une $k$-vari\'et\'e lisse int\`egre, on a 
$\Br_{nr}(k(Y)) \subset \Br(Y)$.

On va suivre ici la   m\'ethode purement alg\'ebrique
qu'avait utilis\'ee  Bogomolov  (\cite{bogomolov}, \cite[Thm. 7.1]{cts})
lorsque $k$ est alg\'ebriquement clos. 

Ceci  permet   d'\'etendre
certains des r\'esultats \'etablis 
 par des m\'ethodes arith\-m\'etiques 
par  Harari \cite{harari}
et par Demarche \cite{demarche} sur le sous-groupe \og{alg\'ebrique}\fg
$$\Br_{nr,1}(k(Y)) : = \Ker [\Br_{nr}(k(Y)) \to \Br(\overline{k}(Y))],$$
  sous-groupe  form\'e des classes 
qui s'annulent
par passage au corps des fonctions $\overline{k}(Y)$ de $\overline{Y}=Y\times_{k}\overline{k}$.

  En particulier, on montre ici (Corollaire \ref{racines}) que pour tout corps $k$
  de caract\'eristique z\'ero ne poss\'edant qu'un nombre fini de racines de l'unit\'e,
   pour  $Y=SL_{n,k}/G$ avec $G$ fini constant d'ordre
  premier au cardinal du groupe des racines de l'unit\'e dans~$k$,
  on a  $\Br(k)=\Br_{nr}(k(Y))$.
  
  Etant donn\'e un corps $k$ de caract\'eristique z\'ero, pour tout premier $p$ impair et tout entier $r\geq 1$,
 l'extension $k(\mu_{p^r})/k$ est cyclique. Il en est encore ainsi pour $p=2$ si $-1$
 est un carr\'e dans $k$.
 
 \medskip
 
  On note $Cyc(G,k)$ la  condition : 
  
  {\it Pour tout sous-groupe cyclique $\Z/2^r \subset G$, 
   l'extension $k(\mu_{2^r})/k$ est cyclique.}

\medskip

Pour $A$ un groupe ab\'elien, $n>0$ un entier et $l$ un nombre premier, on note
$A[n] \subset A$ le sous-groupe annul\'e par $n$ et on note $A\{l\} \subset A$ le sous-groupe
de torsion $l$-primaire.

 \section{Les r\'esultats}

On  note  $\Br_{nr}^0(k(Y)) \subset \Br(Y)$ le groupe de Brauer  normalis\'e,
c'est-\`a-dire le groupe des \'el\'ements qui s'annulent 
  en l'image du point $1\in SL_{n}(k)$.

On a la suite exacte de restriction-inflation
$$0 \to H^2(G,k(X)^{\times})  \to \Br(k(Y)) \to \Br(k(X)).$$

\begin{lem}\label{lemmeinitial}
Soit  $H^2_{nr}(G,k(X)^{\times}) \subset H^2(G,k(X)^{\times}) \subset \Br(k(Y))$
le sous-groupe    form\'e des \'el\'ements  non ramifi\'es.

(i) Dans $\Br(k(X)^G)$, on  a $H^2_{nr}(G,k(X)^{\times}) \cap \Br(k) =0$.

(ii) Le groupe $H^2_{nr}(G,k(X)^{\times}) \subset  H^2(G,k(X)^{\times}) \subset \Br(k(Y))$
 co\"{\i}ncide avec le groupe $\Br_{nr}^0(k(Y))$. 
 
 (iii) Le groupe $H^2_{nr}(G,k(X)^{\times}) = \Br_{nr}^0(k(Y))$ est fini.
\end{lem}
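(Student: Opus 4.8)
The three assertions should be tackled in the order (ii), (i), (iii), since (ii) provides the identification that makes the others transparent. For (ii), the point is to compare two subgroups of $\Br(k(Y))$: on the one hand $H^2_{nr}(G,k(X)^\times)$, the elements of $H^2(G,k(X)^\times)$ that are unramified as classes in $\Br(k(Y))$; on the other hand $\Br_{nr}^0(k(Y))$, the unramified classes vanishing at the image of $1\in SL_n(k)$. First I would recall the restriction--inflation sequence $0\to H^2(G,k(X)^\times)\to \Br(k(Y))\to \Br(k(X))$ displayed just above the lemma. The key input is that $X=SL_{n,k}$ is a smooth $k$-variety with $\Br(k)\iso \Br(X)$ and, more to the point, that it is rational with trivial geometric Picard group, so $\Br_{nr}(k(X))=\Br(k)$. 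Hence a class $\beta\in\Br_{nr}^0(k(Y))$, being unramified, has image in $\Br(k(X))$ that is unramified on $X$, hence lies in $\Br(k)$; but $\beta$ is normalized, i.e. vanishes at the image of $1$, and the map $\Br(k)\to \Br(k(Y))\to \Br(k(X))$ is split by evaluation at the $k$-point $1\in X(k)$, so this image is $0$. Therefore $\beta$ comes from $H^2(G,k(X)^\times)$, and being unramified it lies in $H^2_{nr}(G,k(X)^\times)$. Conversely an element of $H^2_{nr}(G,k(X)^\times)$ is by definition unramified in $\Br(k(Y))$, and it is normalized because it maps to $0$ in $\Br(k(X))$ hence a fortiori vanishes after evaluation at $1$ (which factors through $\Br(k(X))$ via the specialization, once one checks the class is unramified at that point). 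This gives the two inclusions and hence (ii).

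For (i), I would argue that $H^2_{nr}(G,k(X)^\times)\cap \Br(k)=0$ inside $\Br(k(X)^G)=\Br(k(Y))$. An element in the intersection comes from $\Br(k)$, so it is in the image of inflation from $\Gal(\ov k/k)$, and simultaneously it lies in $H^2(G,k(X)^\times)$, the kernel of $\Br(k(Y))\to \Br(k(X))$. But the composite $\Br(k)\to \Br(k(Y))\to \Br(k(X))$ is injective (it is split by the $k$-point $1\in X(k)$, or simply because $k\to k(X)$ induces an injection on Brauer groups as $X$ has a rational point / is geometrically integral with a $k$-point). Hence the element is $0$. Equivalently, this is just the statement that the normalization condition cuts out a complement to $\Br(k)$, which is essentially the content of (ii) combined with the splitting; so (i) is really a corollary of the mechanism used for (ii).

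For (iii), finiteness, I would use (ii) to reduce to showing $\Br_{nr}^0(k(Y))$ is finite, and then bound it by a finite group. The standard route: $\Br_{nr}^0(k(Y))\subset H^2(G,k(X)^\times)$, and one filters $k(X)^\times$ by the exact sequence $1\to \ovk^\times\to k(X)^\times\to k(X)^\times/\ovk^\times\to 1$ — no, better, over $k$ one uses $1\to k^\times \to k(X)^\times \to \mathrm{Div}$-type sequence. Concretely, since $X$ is smooth rational with $\Pic(\ov X)=0$ and $\ovk[\ov X]^\times=\ovk^\times$ (as $SL_n$ has no nonconstant invertible functions and trivial Picard group), the $G$-module $k(X)^\times$ sits in an exact sequence involving $k^\times$ and a permutation-type module of divisors, and $H^2(G,-)$ of each piece is a subquotient of a finite group: $H^2(G,k^\times)$ injects into... hmm, this is not obviously finite. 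The cleaner argument, which I expect to be the real content, is that $H^2_{nr}(G,k(X)^\times)$ injects — via restriction to bicyclic subgroups of $G$, following Bogomolov — into a finite product of Brauer groups of function fields of tori, or directly that Bogomolov's description realizes it as a subgroup of $H^2$ of the finite group $G$ with coefficients in a finitely generated module, hence finite by a theorem on cohomology of finite groups with finitely generated coefficients being finite in positive degrees after tensoring appropriately; alternatively one invokes that $\Br_{nr}(k(Y))$ modulo $\Br(k)$ is known to be finite from the algebraically closed case plus a Galois descent (the Hochschild--Serre spectral sequence for $\ovk(Y)/k(Y)$ together with finiteness of $\Br_{nr}(\ovk(Y))$, which is Bogomolov's theorem, and finiteness of the relevant $H^1(\frak g, \Pic \ov Y)$).

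The main obstacle is (iii): establishing finiteness rigorously requires either Bogomolov's explicit $B_0$-type formula adapted to the non-closed base field, or a descent argument combining the Hochschild--Serre spectral sequence $H^p(\frak g, H^q(\ov Y, \mathbb G_m))$ with the known finiteness of the geometric unramified Brauer group and control of $H^1(\frak g,\Pic\ov Y)$; I would pursue the descent route, noting that $\ov Y = \ov X/G$ has $\Pic\ov Y$ a finitely generated (indeed, subgroup of the character group of a torus / $\widehat G$-related) abelian group with continuous $\frak g$-action, so $H^1(\frak g,\Pic\ov Y)$ is finite, while $\mathrm{coker}$ in $\Br$ is governed by $\Br_{nr}(\ovk(Y))^{\frak g}$ which is finite by Bogomolov. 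Parts (i) and (ii) are formal manipulations with the restriction--inflation sequence and the splitting provided by the rational point $1\in X(k)$, and I do not anticipate difficulty there beyond bookkeeping about where classes are unramified.
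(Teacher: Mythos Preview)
Your approach is correct and matches the paper's: for (i) and (ii) both you and the paper use that $X=SL_{n,k}$ is $k$-rational (hence $\Br_{nr}(k(X))=\Br(k)$) together with the splitting provided by the rational point $1\in X(k)$. For (iii), the paper runs exactly your Hochschild--Serre descent but states it in slightly greater generality---for any smooth projective geometrically unirational model $W$ one has $\Pic(\ov W)$ free of finite type and $\Br(\ov W)$ finite---so there is no need to invoke Bogomolov's specific formula for $\Br_{nr}(\ov k(Y))$.
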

\begin{proof}
On a des inclusions naturelles compatibles  $\Br(k) \hookrightarrow \Br_{nr}(k(Y))$
 et $\Br(k) \hookrightarrow \Br_{nr}(k(X))$, et cette derni\`ere fl\`eche est un isomorphisme
car $X$ est une vari\'et\'e $k$-rationnelle.   Ceci \'etablit (i) et (ii). L'\'enonc\'e de finitude (iii) vaut plus
g\'en\'eralement 
 pour les corps de fonctions de vari\'et\'es projectives, lisses, g\'eom\'etriquement connexes
qui sont g\'eom\'etriquement unirationnelles.  Rappelons bri\`evement la d\'emonstration.
Pour toute $k$-vari\'et\'e projective, lisse, g\'eom\'etriquement int\`egre $X$, 
notant $\ov{X}=X\times_k\ov{k}$, on a une suite exacte
$$ \Br(k) \to \Ker [\Br(X) \to \Br(\ov{X})] \to H^1(\frak{g},\Pic(\ov{X})). $$
Si $X$ est g\'eom\'etriquement unirationnelle, alors le module galoisien  $\Pic(\ov{X})$
est un groupe ab\'elien libre de type fini, et le groupe $ \Br(\ov{X})$ est fini. Ceci suffit 
\`a conclure.
  \end{proof}

\medskip
 
Notons   que  dans la d\'ecomposition d'un \'el\'ement non ramifi\'e de $$H^2(G,k(X)^{\times})\subset \Br(k(Y))$$
en ses composantes $p$-primaires pour chaque premier $p$, chacune de celles-ci est non ramifi\'ee.

 \medskip 

  L'\'enonc\'e suivant est une variante sur un corps non alg\'ebriquement clos d'un th\'eor\`eme
 de Fischer 
 (cf. \cite[Prop. 4.3]{cts}).

\begin{prop}\label{Fischer}
Supposons le groupe $G$ ab\'elien et  la condition $Cyc(G,k)$ satisfaite.
 Il existe une $k$-vari\'et\'e
$Z$ telle que le produit  $Y \times_{k} Z$ est $k$-birationnel \`a un espace projectif sur $k$.
\end{prop}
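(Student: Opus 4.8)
The plan is to run Fischer's classical argument, but to replace its last step --- ``every torus over an algebraically closed field is $k$-rational'' --- by a stable rationality statement for an auxiliary torus over $k$; this last point is where the hypothesis $Cyc(G,k)$ is used.

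\emph{First, reduce to a linear action of our own choosing.} The map $A\mapsto(A\cdot\operatorname{diag}((\det A)^{-1},1,\dots,1),\det A)$ defines a $k$-isomorphism of varieties $GL_{n,k}\iso SL_{n,k}\times\mathbb{G}_{m}$, equivariant for left translation by $G\subset SL_{n,k}$ (left translation by an element of $G$ does not alter the determinant), the target carrying the $G$-action that is trivial on the $\mathbb{G}_{m}$-factor. Hence $GL_{n,k}/G\iso Y\times\mathbb{G}_{m}$ and $k(GL_{n,k})^{G}=k(Y)(t)$. As $GL_{n,k}$ is a $G$-stable dense open subset of the representation $M_{n}=V_{0}^{\oplus n}$, with $V_{0}=k^{n}$ the (faithful) standard representation, we get $k(V_{0}^{\oplus n})^{G}=k(Y)(t)$. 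The no-name lemma --- generic triviality, through Hilbert~90, of the $G$-equivariant vector bundle $V_{0}^{\oplus n}\oplus W\to V_{0}^{\oplus n}$, see \cite{cts} --- then shows that $V_{0}^{\oplus n}/G$ is stably $k$-birational to $W/G$ for \emph{any} faithful $k$-representation $W$ of $G$. It thus suffices to exhibit one faithful $k$-representation $W$ with $W/G$ stably $k$-rational: then $Y$ is stably $k$-rational, and for $Z$ one may take a projective space of suitable dimension.

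\emph{Next, Fischer's device: reduce to a torus quotient.} The absolute Galois group $\mathfrak{g}$ acts on the finite set $\widehat{G}=\Hom(G,\ov{k}^{\times})$ through its action on roots of unity, the action factoring through $\Gal(k(\mu_{e})/k)$ with $e=\exp G$. Choose the $\mathfrak{g}$-stable generating subset $S=\coprod_{\ell}S_{\ell}\subset\widehat{G}$, where for each prime $\ell$, $S_{\ell}$ is a $\mathfrak{g}$-stable generating subset of the $\ell$-primary component $\widehat{G}_{\ell}=\widehat{G_{\ell}}\subset\widehat{G}$. Put $W_{\ov{k}}=\bigoplus_{\chi\in S}\ov{k}_{\chi}$; as $\mathfrak{g}$ permutes these lines, $W$ is defined over $k$, and it is faithful because $S$ generates $\widehat{G}$. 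Inside $W$, the complement $T_{S}$ of the coordinate hyperplanes is a $G$-stable dense open $k$-subtorus with $X^{*}(T_{S})=\Z[S]$ a permutation $\mathfrak{g}$-module; hence $T_{S}$ is quasi-trivial, so $k$-rational, and $W/G$ is $k$-birational to $T_{S}/G$. The morphism $G\to T_{S}$, $g\mapsto(\chi(g))_{\chi\in S}$, is a closed immersion (because $S$ generates $\widehat{G}$), so $T':=T_{S}/G$ is again a $k$-torus; dualizing $1\to G\to T_{S}\to T'\to1$ gives an exact sequence of $\mathfrak{g}$-modules
\[
0\longrightarrow M\longrightarrow\Z[S]\longrightarrow\widehat{G}\longrightarrow0,\qquad M:=X^{*}(T'),
\]
the surjection being $[\chi]\mapsto\chi$. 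By Voskresenskii's criterion (see \cite{cts}), $T'$ is stably $k$-rational provided the $\mathfrak{g}$-lattice $M$ is quasi-permutation; the proposition thus reduces to an assertion about the $\mathfrak{g}$-lattice $M$.

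\emph{Heart of the matter, and the r\^ole of $Cyc(G,k)$.} For the chosen $S=\coprod_{\ell}S_{\ell}$, the displayed exact sequence is the direct sum over $\ell$ of the sequences $0\to M_{\ell}\to\Z[S_{\ell}]\to\widehat{G}_{\ell}\to0$, so $M=\bigoplus_{\ell}M_{\ell}$ and it suffices to treat each prime separately. On $\widehat{G}_{\ell}$ the group $\mathfrak{g}$ acts by scalar multiplication through $\Gal(k(\mu_{\ell^{r}})/k)\subset(\Z/\ell^{r})^{\times}$, where $\ell^{r}$ is the exponent of the $\ell$-Sylow subgroup of $G$. For odd $\ell$ this Galois group is automatically cyclic; for $\ell=2$ it is cyclic exactly by the hypothesis $Cyc(G,k)$ (applied to a subgroup $\Z/2^{r}\subset G$). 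In all cases $M_{\ell}$ is therefore a lattice over a \emph{cyclic} group, and the classical analysis of such lattices (Endo--Miyata; compare Lenstra's treatment of the rationality problem for finite abelian groups) shows that $M_{\ell}$ is stably permutation. Summing over $\ell$, $M$ is stably permutation, a fortiori quasi-permutation, which completes the proof. I expect this last step to be the main obstacle: establishing that each $M_{\ell}$ is stably permutation over the cyclic group $\Gal(k(\mu_{\ell^{r}})/k)$ requires a genuine computation with lattices, and it is precisely the cyclicity guaranteed by $Cyc(G,k)$ that makes it possible --- when $\Gal(k(\mu_{2^{r}})/k)$ is non-cyclic (as for $G=\Z/8$ over $k=\Q$, where it is the Klein four group) the lattice $M_{2}$ fails to be stably permutation, the torus $T'$ is only retract rational, and the conclusion is false ($\Q(\Z/8)$ is not stably $\Q$-rational). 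The two preliminary reductions, by contrast, are essentially formal, the only subtlety being that $SL_{n}$, a hypersurface in $M_{n}$ rather than an open subset, must first be traded for $GL_{n}$.
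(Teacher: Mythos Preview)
Your argument is correct and follows essentially the same route as the paper: reduce via the no-name lemma to a torus quotient, then invoke the Endo--Miyata theorem on flasque lattices over cyclic groups to obtain stable rationality. The only organizational differences are that the paper first reduces to $G=\Z/p^m$ cyclic $p$-primary (using that $X/(G_1\times G_2)$ is stably birational to $X_1/G_1 \times X_2/G_2$) before constructing the torus, whereas you build the torus $T'=T_S/G$ for general abelian $G$ and decompose the character lattice afterwards by primes; and the paper phrases the endgame as ``take a flasque resolution $1\to F\to P'\to T\to 1$, Endo--Miyata makes $F$ invertible, hence $P'$ is $k$-birational to $T\times_k F$'' rather than asserting directly that $M$ is stably permutation. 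These are the same ingredients in a slightly different order.
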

\begin{proof}  On dit que deux $k$-vari\'et\'es $W_{1}$ et $W_{2}$
sont stablement $k$-birationnellement \'equivalentes s'il existe
des espaces projectifs $\P^r_{k}$ et $\P^s_{k}$ tels que $W_{1}\times_{k}\P^r_{k}$
est $k$-birationnel \`a $W_{2 }\times_{k}\P^s_{k}$.  D'apr\`es une version
du  \og lemme sans nom \fg,
pour un groupe fini $G$, des $k$-groupes sp\'eciaux $X_{1}$ et $X_{2}$
 et des plongements $G \subset X_{1}$
et $G \subset X_{2}$, le quotient $X_{1}/G$ est stablement $k$-birationnel
\`a $X_{2}/G$ (\cite[Prop. 4.9]{cts}). Les produits de groupes $SL_{n,k}$
($n$ variable)  sont sp\'eciaux. 
Si donc l'on a un plongement $G_{1} \subset X_{1}$, un plongement $G_{2} \subset X_{2}$
et un plongement $G_{1} \times G_{2} \subset X$, o\`u chacun des groupes $X$ et $X_{i}$
est un groupe sp\'ecial lin\'eaire, alors $X/G$ est stablement $k$-birationnel \`a
$(X_{1} \times X_{2})/(G_{1} \times G_{2}) = X_{1}/G_{1} \times X_{2}/G_{2}$.

Pour \'etablir la proposition, on est donc ramen\'e au cas o\`u 
$G=\Z/p^m$ est un groupe ab\'elien cyclique $p$-primaire, pour  $p$ un nombre premier.
L'extension $k(\mu_{p^m})/k$ est cyclique si $p$ est impair, et c'est encore le cas
si $p=2$ d'apr\`es l'hypoth\`ese de la proposition.
 
 Suivant  Voskresenski\u{\i}, rappelons comment cela se traduit en termes
 de tores.
 Soit $g={\rm Gal}(k(\mu_{p^m})/k)$ et soit $\hat{G}=\Hom(G,\mu_{p^m})=\mu_{p^m}$.
 Le groupe ab\'elien libre de base les \'el\'ements de $\mu_{p^m}$, qui est un $g$-module de
 permutation,
 s'envoie de fa\c con \'evidente et $g$-\'equivariante sur le groupe $\mu_{p^m}$.
 On note $\hat{T}$ le noyau de cette application. On a donc la suite exacte
 de $g$-modules continus discrets :
$$0 \to \hat{T} \to \oplus_{\zeta \in \mu_{p^m}} \Z.\zeta \to \hat{G} \to 0.$$
Par dualit\'e, on obtient une suite exacte de $k$-groupes alg\'ebriques de type multiplicatif
$$ 1 \to G \to P \to T \to 1,$$
o\`u $G=\Z/p^m$, $T$ est un $k$-tore et   $P$ est un $k$-tore quasi-trivial,
en particulier un $k$-groupe sp\'ecial. 
Ainsi (\cite[Prop. 4.9]{cts}), pour tout $k$-plongement $G \subset X=SL_{n,k}$, le quotient
$X/G$ est 
 stablement $k$-birationnel au $k$-tore $T$. 

 Le $k$-tore $T$ est d\'eploy\'e par une extension cyclique. 
 Rappelons comment ceci implique   qu'il existe un $k$-tore $T_{1}$ tel que
 $T \times_{k}T_{1}$ est $k$-birationnel \`a un espace projectif, ce qui ach\`evera la
 d\'emonstration.
 
On sait   (Voskresenski\u{\i}, Endo-Miyata)  que
 tout $k$-tore $T$  d\'eploy\'e par une extension galoisienne $K/k$
 admet une r\'esolution flasque, c'est-\`a-dire qu'il existe une suite 
 de $k$-tores  d\'eploy\'es par l'extension $K/k$
 $$1 \to F \to P \to T \to 1,$$
o\`u  $P$ est un $k$-tore quasi-trivial, donc $k$-birationnel \`a un espace projectif
 et $F$ est un $k$-tore flasque (voir \cite[Lemme 3 p.~18]{RET} et \cite[\S 5]{RET}). 
 Supposons $K/k$ cyclique.
  D'apr\`es un th\'eor\`eme de
 Endo et Miyata (voir \cite[Prop. 2, p.~184]{RET}) tout $k$-tore flasque $S$ d\'eploy\'e par
 une telle extension  est un facteur direct d'un $k$-tore quasitrivial : il existe
 un $k$-tore $S'$ tel que le $k$-tore $S \times_{k}S'$ soit $k$-isomorphe 
 \`a un $k$-tore quasitrivial.
 Le th\'eor\`eme 90 de Hilbert et la r\'esolution flasque ci-dessus assurent
 alors
que la $k$-vari\'et\'e $k$-rationnelle $P$ est $k$-birationnelle au produit
 $T \times_{k} F$.  
 \end{proof}

Cette proposition implique imm\'ediatement le th\'eor\`eme :

\begin{theo}\label{theoprincip1}

(i)    Soit $G$ ab\'elien. Si   la condition $Cyc(G,k)$ est satisfaite, alors
$\Br_{nr}^0(k(X)^G)= 0$.

(ii)  Soit $G$ ab\'elien. On a $2.\Br_{nr}^0(k(X)^G)= 0$.

Soit $G$ fini quelconque.

 (iii) Pour tout sous-groupe ab\'elien $A \subset G$,
 l'image de  $H^2_{nr}(G,k(X)^{\times})$
  dans $H^2(A,k(X)^{\times})$ est annul\'ee par 2.

(iv)  
Si la condition $Cyc(G,k)$ est satisfaite,
 pour  tout sous-groupe ab\'elien $A \subset G$,
 l'image de  $H^2_{nr}(G,k(X)^{\times})$
  dans $H^2(A,k(X)^{\times})$ est nulle. \qed
\end{theo}

La proposition \ref{Fischer}
implique aussi un \'enonc\'e sur la cohomologie non ramifi\'ee de degr\'e quelconque
(cf. \cite{ctsb}) :

\begin{theo} Soient  
  $i\geq 0$ et $m\geq  1$ des entiers, $j\in \Z$. 
 
(i) Tout \'el\'ement normalis\'e de $H^{i}_{nr}(k(X)^G,\mu_{m}^{\otimes j})$
a une image nulle dans $H^{i}(k(X)^A,\mu_{m}^{\otimes j})$  
pour tout sous-groupe ab\'elien $A \subset G$
d'ordre impair.

(ii) Si la condition $Cyc(G,k)$ est satisfaite,
  tout \'el\'ement normalis\'e de $H^{i}_{nr}(k(X)^G,\mu_{m}^{\otimes j})$
a une image nulle dans $H^{i}(k(X)^A,\mu_{m}^{\otimes j})$  
pour tout sous-groupe ab\'elien $A \subset G$.\qed
\end{theo}

   \medskip
 
{\bf Notation}  Soit $G$ un groupe fini. Pour tout $G$-module $M$ et tout entier $i \geq 0$,
 on d\'efinit les sous-groupes
$${\cyr{X}}^{i}_{ab}(G,M) \subset {\cyr{X}}^{i}_{bicyc}(G,M)   \subset {\cyr{X}}^{i}_{cyc}(G,M)  \subset H^{i}(G,M)$$
comme les sous-groupes form\'es des \'el\'ements de $H^{i}(G,M)$ dont la restriction
\`a chaque sous-groupe cyclique, resp. bicyclique, resp. ab\'elien $H \subset G$
est nulle. Par d\'efinition, un  groupe bicyclique est un groupe ab\'elien engendr\'e 
 par  deux \'el\'ements.
 
 Un $G$-module de permutation est un groupe ab\'elien libre qui admet une base
 respect\'ee par l'action de $G$. C'est   une somme directe de $G$-modules $\Z[G/H]$
 pour divers sous-groupes $H \subset G$. On a $H^1(G,\Z)=\Hom(G,\Z)=0$,
 et $${\cyr{X}}^{2}_{cyc}(G,\Z) \simeq {\cyr{X}}^{1}_{cyc}(G,\Q/\Z) =0,$$
 car tout caract\`ere de $G$ nul sur tout \'el\'ement de $G$ est nul.
 En utilisant le lemme de Shapiro, on \'etend ces r\'esultats sur le $G$-module  $\Z$
 aux $G$-modules de permutation. Ceci permet d'\'etablir le :

\begin{lem}\label{lemmesha} Soit $G$ un groupe fini.

(i) Pour $M$ un $G$-module de permutation, on a $H^1(G,M)=0$.

(ii) Pour $M$ un $G$-module sans torsion,   on a $$H^1(G,M\otimes \Q/\Z) \iso H^2(G,M)$$ et, si l'action de $G$ est triviale, $H^1(G,M)=0$.

(iii) Pour  $M$ un $G$-module avec action triviale,   les groupes
$${\cyr{X}}^{1}_{ab}(G,M) \subset {\cyr{X}}^{1}_{bicyc}(G,M)   \subset {\cyr{X}}^{1}_{cyc}(G,M)  $$
sont nuls.

(iv)  Pour  $M$ un $G$-module de permutation, les groupes
$${\cyr{X}}^{2}_{ab}(G,M) \subset {\cyr{X}}^{2}_{bicyc}(G,M)   \subset {\cyr{X}}^{2}_{cyc}(G,M)$$
sont nuls.

(v) Soit $$0 \to A \to B \to C \to 0$$
une suite exacte de $G$-modules. Si $C$ est un $G$-module de permutation, 
  cette suite
induit des isomorphismes ${\cyr{X}}^{2}_{ab}(A) \oi {\cyr{X}}^{2}_{ab}(B)$,
${\cyr{X}}^{2}_{bicyc}(A) \oi {\cyr{X}}^{2}_{bicyc}(B)$, ${\cyr{X}}^{2}_{cyc}(A) \oi {\cyr{X}}^{2}_{cyc}(B)$.

(vi) Soit $$0 \to A \to B \to C \to 0$$
une suite exacte de $G$-modules. Si $C$ est un $G$-module sans torsion
 avec action triviale de $G$, cette suite
induit des isomorphismes ${\cyr{X}}^{2}_{ab}(A) \oi {\cyr{X}}^{2}_{ab}(B)$,
${\cyr{X}}^{2}_{bicyc}(A) \oi {\cyr{X}}^{2}_{bicyc}(B)$, ${\cyr{X}}^{2}_{cyc}(A) \oi {\cyr{X}}^{2}_{cyc}(B)$.
\qed
\end{lem}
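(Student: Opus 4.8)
The plan is to establish each item of Lemma \ref{lemmesha} by reducing everything to the already-observed statements about the $G$-module $\Z$ via Shapiro's lemma, and to handle the filtration statements (iii)--(vi) by dévissage. For (i), write a permutation module as $M = \bigoplus_H \Z[G/H]$; then $H^1(G,\Z[G/H]) = H^1(H,\Z) = \Hom(H,\Z) = 0$ by Shapiro, and cohomology commutes with the finite direct sum (or with arbitrary direct sums in each fixed degree for a finite group, but a permutation module here is free of finite rank so no subtlety). For (ii), if $M$ is torsion-free then $0 \to M \to M\otimes\Q \to M\otimes\Q/\Z \to 0$ is exact; since $G$ is finite and $M\otimes\Q$ is uniquely divisible, $H^i(G,M\otimes\Q)=0$ for $i\geq 1$, so the connecting map gives $H^1(G,M\otimes\Q/\Z) \iso H^2(G,M)$. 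When the action is trivial and $M$ is torsion-free, $H^1(G,M) = \Hom(G,M) = 0$ because $G$ is finite (a torsion group) and $M$ is torsion-free.

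For (iii), with $M$ a trivial module, $H^1(G,M) = \Hom(G,M)$, and more generally for any subgroup $H$ the restriction to $H$ is restriction of the homomorphism. The point is that a homomorphism $G \to M$ that vanishes on every cyclic subgroup vanishes on every element, hence is zero; so already ${\cyr{X}}^1_{cyc}(G,M) = 0$, which forces the two smaller subgroups to vanish as well (the inclusions are the obvious ones). For (iv), by (i) we have $H^1(G,M)=0$ for $M$ a permutation module, so (ii) applied to $M$ (which is torsion-free) gives $H^2(G,M) \cong H^1(G, M\otimes\Q/\Z)$, and this isomorphism is compatible with restriction to subgroups; one then checks that under this identification ${\cyr{X}}^2_{cyc}(G,M)$ corresponds to ${\cyr{X}}^1_{cyc}(G, M\otimes\Q/\Z)$. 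Now decompose $M\otimes\Q/\Z = \bigoplus_H (\Q/\Z)[G/H]$ and use Shapiro: the restriction of a class to a cyclic subgroup $C$ becomes, after Shapiro, a statement about $H^1(C', \Q/\Z)$ for various subgroups $C'$ of $C$, all of which are cyclic; combined with the fact established in the text that $H^1$ of a cyclic group with coefficients in a $\Q/\Z$-permutation module is detected on... Actually the cleanest route is: the text has already recorded ${\cyr{X}}^2_{cyc}(G,\Z) \simeq {\cyr{X}}^1_{cyc}(G,\Q/\Z) = 0$; extend this to $\Z[G/H]$ by Shapiro, using that restriction-corestriction and the Mackey formula relate the restriction of a class in $H^*(G,\Z[G/H])$ to a cyclic $C \subset G$ with classes in $H^*(C \cap {}^g H, \Z)$, and $C \cap {}^g H$ is again cyclic, so the vanishing of ${\cyr{X}}^2_{cyc}$ for the trivial module $\Z$ over arbitrary finite groups (which follows from (iii) and (ii)) propagates.

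For (v) and (vi), the method is the long exact cohomology sequence compatible with restriction. Given $0 \to A \to B \to C \to 0$ with $C$ a permutation module, (i) applied to $C$ and to each subgroup $H$ (since $C$ restricted to $H$ is again a permutation $H$-module) gives $H^1(H,C) = 0$, so the sequence $H^1(H,B) \to H^1(H,C) \to H^2(H,A) \to H^2(H,B) \to H^2(H,C)$ shows $H^2(H,A) \hookrightarrow H^2(H,B)$, and the cokernel injects into $H^2(H,C)$. Chasing the commuting diagram of restriction maps, a class in $H^2(B)$ restricting to zero on all cyclic (resp. bicyclic, resp. abelian) $H$ lifts uniquely to a class in $H^2(A)$ with the same vanishing property, and conversely; hence the induced maps on ${\cyr{X}}^2_{?}$ are isomorphisms. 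For (vi) the same argument works once one knows $H^1(H,C)=0$ for $C$ torsion-free with trivial action, which is the second half of (ii) applied over each $H$. The main obstacle is item (iv): making the Shapiro-lemma reduction genuinely rigorous requires care with the Mackey double-coset formula to see that restricting an induced module to a cyclic subgroup only produces induced modules from cyclic subgroups, so that the vanishing of ${\cyr{X}}^2_{cyc}$ and ${\cyr{X}}^1_{cyc}$ for the trivial coefficient modules $\Z$ and $\Q/\Z$ (over arbitrary finite groups, via (ii) and (iii)) can be bootstrapped; everything else is a formal diagram chase with long exact sequences.
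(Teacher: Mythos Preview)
Your argument is correct and follows the same route as the paper, which only records the two seed facts $H^1(G,\Z)=0$ and ${\cyr{X}}^{2}_{cyc}(G,\Z)\simeq{\cyr{X}}^{1}_{cyc}(G,\Q/\Z)=0$ in the paragraph preceding the lemma, says ``on \'etend via Shapiro'', and then ends with \qed; you have simply spelled out the Shapiro/Mackey reduction for (iv) and the long-exact-sequence chases for (v)--(vi). The one point to make explicit is the surjectivity step in (v) and especially (vi): lifting a class of ${\cyr{X}}^{2}_{?}(G,B)$ back to $H^2(G,A)$ requires its image in $H^2(G,C)$ to lie in ${\cyr{X}}^{2}_{?}(G,C)=0$ --- in (v) this is exactly your (iv), while in (vi) it is not (iv) but rather (ii)$+$(iii) applied to the torsion-free trivial module $C$ (so that ${\cyr{X}}^{2}_{?}(G,C)\simeq{\cyr{X}}^{1}_{?}(G,C\otimes\Q/\Z)=0$), an ingredient you have established but did not cite at that step.
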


\begin{theo}\label{theoprincip2} 
Supposons la condition $Cyc(G,k)$ satisfaite. Soit $\mu(k)$ le groupe
des racines de l'unit\'e dans~$k$.

(a) La fl\`eche naturelle $$H^2(G,k^{\times}) \to H^2(G,k(X)^{\times})$$
induit un isomorphisme entre un sous-groupe de
$ {\cyr{X}}^2_{ab}(G,k^{\times})$
et le groupe  $H^2_{nr}(G,k(X)^{\times})$.

(b) La fl\`eche $H^2(G,\mu(k)) \to H^2(G,k^{\times})$ induit un isomorphisme  de groupes   
 $${\cyr{X}}^2_{ab}(G,\mu(k)) \oi  {\cyr{X}}^2_{ab}(G,k^{\times}),$$ et 
la fl\`eche compos\'ee $$H^2(G,\mu(k)) \to H^2(G,k^{\times}) \to H^2(G,k(X)^{\times})$$ identifie 
un sous-groupe du groupe   ${\cyr{X}}^2_{ab}(G,\mu(k))$
avec
le groupe $$\Br_{nr}^0(k(X)^G) \subset H^2(G,k(X)^{\times}).$$
 \end{theo}
  \begin{proof}  
Toute fonction inversible sur  $X=SL_{n,k}$ est constante, et   le groupe de
Picard  de $SL_{n,k}$ est nul.
La fl\`eche diviseur $ k(X)^{\times} \to \Div(X)$
sur le corps des fonctions de $X$ donne donc naissance \`a une suite exacte de $G$-modules
$$ 1 \to k^{\times} \to k(X)^{\times} \to   \Div(X)   \to 0.$$
Le $G$-module $\Div(X)$ est   le groupe ab\'elien libre sur
les points de codimension~1 de $X$. C'est donc un $G$-module de permutation.
  
  Le lemme \ref{lemmesha} (v)
  donne alors 
  $$\cyr{X}^2_{ab}(G,k^{\times}) \oi \cyr{X}^2_{ab}(G,k(X)^{\times}).$$
D'apr\`es le th\'eor\`eme \ref{theoprincip1},
on a une injection
$$H^2_{nr}(G,k(X)^{\times}) \hookrightarrow \cyr{X}^2_{ab}(G,k(X)^{\times}),$$
ce qui \'etablit (a).

Pour tout corps $k$ on a  la suite exacte de $G$-modules (avec action triviale)
 $$ 1 \to \mu(k) \to k^{\times} \to k^{\times}/\mu(k) \to 1$$
o\`u $R:=k^{\times}/\mu(k)$ est sans torsion.
D'apr\`es le 
  lemme \ref{lemmesha} (vi), on a 
   $${\cyr{X}}^2_{ab}(G,\mu(k)) \oi {\cyr{X}}^2_{ab}(G,k^{\times}).$$
  L'\'enonc\'e (b) r\'esulte alors de l'\'enonc\'e (a).
\end{proof}

   \begin{rema}
 Pour la torsion impaire, ou sous l'hypoth\`ese $Cyc(G,k)$, cet argument donne
 une autre d\'emonstration de la finitude de $\Br_{nr}^0(k(X)^G)$ (lemme \ref{lemmeinitial}).  Il suffit d'observer 
 que le groupe $H^2(G,\mu(k))$, et donc aussi 
 ${\cyr{X}}^2_{ab}(G,\mu(k)) \subset H^2(G,\mu(k))$, est fini. 
 Soit $l$ premier divisant l'ordre de $G$.
 Le groupe $\mu(k)\{l\}$ est soit fini soit isomorphe \`a $\Q_{l}/\Z_{l}$.
Dans les deux cas, le groupe $H^2(G, \mu(k)\{l\})$ est fini.
C'est clair dans le premier cas. Dans le second, $H^2(G, \Q_{l}/\Z_{l}) \iso H^3(G,\Z)\{l\}$.
  \end{rema}

\begin{cor}\label{racines}
Soit  $\mu(k)$ le groupe
des racines de l'unit\'e dans $k$. 

(a)  Si  le groupe  $\mu(k)$ est fini et  d'ordre premier 
 \`a celui de $G$ (ce qui implique que l'ordre de $G$ est  impair et   $Cyc(G,k)$ satisfaite), alors
$\Br_{nr}^0(k(X)^G)=0$. 

(b) Pour $k=\Q$ et $G$ d'ordre impair, on a 
$\Br(\Q)=\Br_{nr}(\Q(X)^G)$.

(c) Soit  $k=\R$.
  
(c1) On a   $\Br_{nr}^0(\R(X)^G) \subset {\cyr{X}}^2_{ab}(G,\Z/2)$.

(c2) On a $2.\Br_{nr}(\R(X)^G)=0$.

(c3) Si les 2-sous-groupes de Sylow de $G$ sont ab\'eliens, alors  $$\Br^0_{nr}(\R(X)^G)=0.$$
\end{cor}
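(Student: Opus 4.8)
The plan is to combine Theorem~\ref{theoprincip1}, Theorem~\ref{theoprincip2} and Lemma~\ref{lemmeinitial} with the elementary arithmetic of $\mu(k)$ for each of the three fields. The common reduction is the direct sum decomposition
$$\Br_{nr}(k(X)^G) = \Br(k) \oplus \Br_{nr}^0(k(X)^G),$$
which I would establish first: any $\alpha \in \Br_{nr}(k(X)^G)$ restricts along the degree-$|G|$ extension $k(X)/k(X)^G$ to an unramified class, hence to an element of $\Br_{nr}(k(X)) = \Br(k)$, since $X = SL_{n,k}$ is $k$-rational; subtracting a lift of that element to $\Br(k) \subseteq \Br(k(X)^G)$ puts $\alpha$ in $\Ker[\Br(k(X)^G) \to \Br(k(X))] \cap \Br_{nr} = H^2(G,k(X)^{\times}) \cap \Br_{nr} = H^2_{nr}(G,k(X)^{\times})$, using the restriction--inflation sequence. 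As $\Br(k) \cap H^2_{nr}(G,k(X)^{\times}) = 0$ (Lemma~\ref{lemmeinitial}(i)) and $H^2_{nr}(G,k(X)^{\times}) = \Br_{nr}^0(k(X)^G)$ (Lemma~\ref{lemmeinitial}(ii)), the decomposition follows; in particular $\Br_{nr}^0(k(X)^G) = 0$ forces $\Br(k) = \Br_{nr}(k(X)^G)$.

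For (a): since $\mathrm{char}\,k = 0$ we have $-1 \in \mu(k)$, so $2 \mid |\mu(k)|$; the coprimality hypothesis then makes $|G|$ odd, and $Cyc(G,k)$ holds vacuously (there is no subgroup $\Z/2^r \subset G$ with $r \geq 1$). By Theorem~\ref{theoprincip2}(b), $\Br_{nr}^0(k(X)^G)$ embeds into ${\cyr{X}}^2_{ab}(G,\mu(k)) \subseteq H^2(G,\mu(k))$; the latter group is killed by both $|G|$ and $|\mu(k)|$ (trivial $G$-action on a finite module of order prime to $|G|$), hence vanishes. Part (b) is then the special case $k = \Q$, $|\mu(\Q)| = 2$, of (a) together with the decomposition above.

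For (c) I would first observe that $\R(\mu_{2^r})/\R$ is cyclic for every $r$ — it equals $\R$ for $r \leq 1$ and $\C$ for $r \geq 2$ (as $\mu_{2^r}$ then contains $i$) — so $Cyc(G,\R)$ holds for every finite $G$. Theorem~\ref{theoprincip2}(b) with $\mu(\R) = \{\pm 1\}$ then gives the embedding $\Br_{nr}^0(\R(X)^G) \hookrightarrow {\cyr{X}}^2_{ab}(G,\Z/2)$, which is (c1); since $H^2(G,\Z/2)$ is $2$-torsion and $\Br(\R) = \Z/2$, the decomposition gives (c2). For (c3), if a $2$-Sylow subgroup $S$ of $G$ is abelian, then every class in ${\cyr{X}}^2_{ab}(G,\Z/2)$ restricts to $0$ on $S$ by definition, while $\mathrm{res}^G_S \colon H^2(G,\Z/2) \to H^2(S,\Z/2)$ is injective because $\mathrm{cor}^G_S \circ \mathrm{res}^G_S$ is multiplication by the odd integer $[G:S]$ on the finite $2$-group $H^2(G,\Z/2)$; hence ${\cyr{X}}^2_{ab}(G,\Z/2) = 0$ and (c3) follows from (c1).

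The only point demanding a little care is the bookkeeping of the first paragraph — verifying that an unramified class of $k(X)^G$ genuinely splits as a constant class plus a class in $H^2_{nr}(G,k(X)^{\times})$ and that the two summands intersect trivially (Lemma~\ref{lemmeinitial}(i),(ii)); everything else reduces to the displayed arithmetic of $\mu(k)$ plus, for (c3), the standard restriction--corestriction argument, so I do not expect a genuine obstacle.
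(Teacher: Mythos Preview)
Your argument is correct and follows the paper's approach: embed $\Br_{nr}^0(k(X)^G)$ into ${\cyr{X}}^2_{ab}(G,\mu(k))$ via Theorem~\ref{theoprincip2}(b) and then exploit the arithmetic of $\mu(k)$ in each of the three cases. The only difference is that you spell out details the paper leaves implicit --- the splitting $\Br_{nr}=\Br(k)\oplus\Br_{nr}^0$ (which is in fact immediate from evaluation at the $k$-point of $Y$ lying under $1\in SL_n(k)$, without routing through $k(X)$) and the restriction--corestriction argument for~(c3).
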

\begin{proof}
 Dans le cas (a), on a $H^2(G,\mu(k))=0$, car ce groupe est annul\'e par l'ordre de $G$
 et par l'ordre de $\mu(k)$.
   Le th\'eor\`eme \ref{theoprincip2} donne alors~(a), et (b) est alors une cons\'equence imm\'ediate,
   puisque $\mu(\Q)=\Z/2$.
   Sur $\R$, la condition $Cyc(G,\R)$ est trivialement satisfaite, et $\mu(\R)=\Z/2$.
   Ceci donne (c1), et les \'enonc\'es (c2) et (c3) s'en suivent.
\end{proof}

\begin{rema}
  Dans la situation du (a) du corollaire \ref{racines}  ci-dessus, sur un corps de nombres, 
 par une m\'ethode arithm\'etique,
 Demarche \cite[Thm. 5]{demarche} a \'etabli la nullit\'e du 
 groupe $\Br_{nr,1}^0(k(X)^G) \subset \Br_{nr}^0(k(X)^G)$.
   \end{rema}

 \begin{rema}
Comme me l'a indiqu\'e C.~Demarche, les $p$-groupes qu'il \'etudie dans
\cite[\S 6]{demarche} fournissent, pour $p=2$, un exemple de groupe fini $G$
d'ordre~64
tel que le   groupe de Brauer  non ramifi\'e normalis\'e  $\Br^0_{nr} (\Q(X)^G)$ poss\`ede un \'el\'ement
alg\'ebrique  qui ne s'annule pas dans $\Br^0_{nr} (\R(X)^G)$ .
 \end{rema}

\begin{theo}\label{bogogeneral}
Supposons que pour tout groupe cyclique  $\Z/m \subset G$, on~a $\mu_{m}(k)=\mu_{m}(\k)$.
 On a alors des isomorphismes de groupes finis
 
 $${\cyr{X}}^2_{ab}(G,k^{\times})  \oi {\cyr{X}}^2_{bicyc}(G,k^{\times}) \oi H^2_{nr}(G,k(X)^{\times})$$
 
\noindent
 et 
  
  $${\cyr{X}}^2_{ab}(G,\mu(k)) \oi {\cyr{X}}^2_{bicyc}(G,\mu(k))  \oi H^2_{nr}(G,k(X)^{\times}).$$
  \end{theo}
  \begin{proof} L'hypoth\`ese  faite sur les racines de l'unit\'e 
implique que $Cyc(G,k)$ est satisfaite.

D'apr\`es le th\'eor\`eme  \ref{theoprincip1}, on a
 les inclusions
$$H^2_{nr}(G,k(X)^{\times}) \subset {\cyr{X}}^2_{ab}(G,k(X)^{\times}) 
 \subset   {\cyr{X}}^2_{bicyc}(G,k(X)^{\times}) .$$

L'hypoth\`ese  faite sur les racines de l'unit\'e  implique aussi que pour  tout anneau de valuation discr\`ete de rang~1 de
$k(X)^G$,  contenant $k$, 
 l'action des sous-groupes de d\'ecomposition de $G$  sur l'inertie
 est triviale. On peut donc copier l'argument de Bogomolov (cf. \cite[Thm. 6.1, Thm. 7.1]{cts}),
 qui m\`ene \`a l'inclusion
 $$ {\cyr{X}}^2_{bicyc}(G,k(X)^{\times}) \subset H^2_{nr}(G,k(X)^{\times}).$$
 Dans $H^2(G,k(X)^{\times}) $, on a donc les \'egalit\'es
 $$ {\cyr{X}}^2_{ab}(G,k(X)^{\times}) = {\cyr{X}}^2_{bicyc}(G,k(X)^{\times}) =H^2_{nr}(G,k(X)^{\times}).$$

Comme dans la d\'emonstration du  th\'eor\`eme \ref{theoprincip2},
on d\'eduit  du 
 lemme \ref{lemmesha} 
 les isomorphismes
  $$ {\cyr{X}}^2_{ab}(G, k^{\times}) \oi {\cyr{X}}^2_{ab}(G,k(X)^{\times}) $$
 $$ {\cyr{X}}^2_{bicyc}(G,k^{\times})  \oi {\cyr{X}}^2_{bicyc}(G,k(X)^{\times}).$$
 $$ {\cyr{X}}^2_{bicyc}(G,\mu(k))  \oi  {\cyr{X}}^2_{bicyc}(G,k^{\times})$$
$${\cyr{X}}^2_{ab}(G,\mu(k))  \oi  {\cyr{X}}^2_{ab}(G,k^{\times}).$$
\end{proof}

 Notons $\hat{G}= \Hom(G,\Q/\Z)=\Hom(G^{ab},\Q/\Z)$ le groupe des caract\`eres du groupe  $G$.

\begin{theo}\label{racinesfinies}
Supposons que le corps $k$ ne contient qu'un nombre fini de
racines de l'unit\'e, et  
supposons  la condition $Cyc(G,k)$ satisfaite.

(i) Soit $r>0$ tel que   $\mu(k)=\mu_{r}(k)$.
Le  groupe de Brauer non ramifi\'e alg\'ebrique normalis\'e 
$$Ker [\Br_{nr}^0(k(X)^G) \to \Br({\overline k}(X)^G)]= Ker[H^2_{nr}(G,k(X)^{\times}) \to H^2(G,\k(X)^{\times})]$$ s'identifie \`a un sous-groupe de $Ker [\hat{G}/r \to \prod_{A}  \hat{A}/r],$ o\`u $A$ parcourt les sous-groupes ab\'eliens de $G$.

(ii) Si $G$ est un groupe simple non ab\'elien, on a $\Br_{nr}^0(k(X)^G)=0$.
\end{theo}
\begin{proof} Le th\'eor\`eme \ref{theoprincip2} donne un plongement
$$ \Br_{nr}^0(k(X)^G)  \hookrightarrow  {\cyr{X}}^2_{ab}(G,\mu(k))  \subset H^2(G,\mu(k))$$
et donc aussi un plongement
$$ \Br_{nr}^0(k(X)^G)  \hookrightarrow {\cyr{X}}^2_{ab}(G,\mu_{r}(k)).$$

Par ailleurs sur $\overline k$, d'apr\`es Bogomolov (Thm. \ref{bogogeneral} ci-dessus)
on a un isomorphisme
$$ \Br_{nr}^0({\overline k}(X)^G)  \oi {\cyr{X}}^2_{ab}(G ,\mu(\k)) \subset H^2(G,\mu(\k)).$$

Le fl\`eches $\Br_{nr}^0(k(X)^G)  \to   H^2(G,\mu(k) )$  et 
$ \Br_{nr}^0({\overline k}(X)^G)  \to H^2(G,\mu(\k))$ sont compatibles, comme on le v\'erifie imm\'ediatement.
L'\'enonc\'e (i)  r\'esulte alors de la suite exacte de $G$-modules \`a action triviale
$$1 \to \mu_{r}(k) \to \Q/\Z(1) \to \Q/\Z(1) \to 1$$
d\'efinie par la multiplication par $r$ sur $\mu(\k)=\Q/\Z(1)$.

Si $G$ est comme en (ii), alors d'une part $\hat{G}=0$, d'autre part
$$\Br_{nr}^0({\overline k}(X)^G) =0,$$ d'apr\`es Kunyavski\u{\i} \cite{kuny}.
L'\'enonc\'e r\'esulte (ii) r\'esulte donc de (i).
\end{proof}

\begin{rema}
Le th\'eor\`eme  \ref{racinesfinies} s'applique aux   corps de nombres et  plus g\'en\'e\-ralement aux corps de type fini sur un corps de nombres, mais aussi aux corps $p$-adiques et aussi aux r\'eels.  Sur de tels corps, on voit donc
que si le groupe $G$ n'a  pas de caract\`eres, 
le groupe de Brauer non ramifi\'e alg\'ebrique  normalis\'e est nul. 
Sur un corps de nombres, 
ceci avait \'et\'e \'etabli par Harari
\cite[Prop. 4]{harari}, qui donne une formule pour le groupe de Brauer non ramifi\'e alg\'ebrique normalis\'e
comme sous-groupe de $H^1(\frak{g}, \Hom(G,\Q/\Z(1)))$, formule qui 
implique clairement que ce groupe est nul
 si $G$ n'a pas de caract\`eres.
\end{rema}

Pour les r\'eels, on obtient  l'\'enonc\'e  suivant.

\begin{cor}
Soit $k=\R$.   
Le groupe de Brauer non ramifi\'e alg\'ebrique normalis\'e  de $\R(X)^G$
$$Ker [\Br_{nr}^0(\R(X)^G) \to \Br(\C(X)^G)]$$ s'identifie \`a un sous-groupe de
$Ker [ \hat{G}/2  \to  \prod_{A \hskip1mm} \hat{A}/2],$
o\`u $A$ parcourt les sous-groupes ab\'eliens de $G$. \qed
\end{cor}


\begin{thebibliography}{99}

\bibitem{bogomolov} F. A. Bogomolov,
Le groupe de Brauer des espaces quotients de repr\'esentations lin\'eaires
(en russe),
Izv. Akad. Nauk SSSR Ser. Mat. {\bf 51} (1987) 485--516, 688.


\bibitem{ctsb}
 J.-L. Colliot-Th\'{e}l\`{e}ne,
 Birational invariants, purity and the Gersten conjecture,
in  {\it K-Theory and Algebraic Geometry: Connections with Quadratic Forms and Division Algebras}, AMS Summer Research Institute, Santa Barbara 1992, ed. W. Jacob and A. Rosenberg, Proceedings of Symposia in Pure Mathematics {\bf  58}, Part I (1995) p. 1--64.

\bibitem{RET} J.-L. Colliot-Th\'el\`ene et J.-J. Sansuc,  
 La R-\'equivalence sur les tores. Ann. Sc. \'Ec. Norm. Sup. {\bf 10}
(1977) 175--229.

 
 \bibitem{cts}  J.-L. Colliot-Th\'el\`ene et J.-J. Sansuc, The rationality problem for fields
of invariants under linear algebraic groups (with 
special regards to the rationality problem), 
in {\it Proceedings of the International Colloquium on Algebraic groups and Homogeneous Spaces}
(Mumbai 2004), ed. V. Mehta,  TIFR Mumbai, Narosa Publishing House (2007),  113--186.
 
  \bibitem{demarche} C. Demarche, Groupe de Brauer non ramifi\'e d'espaces homog\`enes
  \`a stabilisateurs finis, Math. Annalen {\bf 346} (2010) 949--968.
  
  \bibitem{grot} A. Grothendieck, Le groupe de Brauer I, II, III, in {\it Dix expos\'es sur la
  cohomologie des sch\'emas}, North-Holland, Amsterdam, Masson, Paris, 1968.
  
  
  \bibitem{harari}  D. Harari, Quelques propri\'et\'es d'approximation reli\'ees \`a la cohomologie galoisienne
  d'un groupe fini,
   Bull. Soc. Math. France {\bf 135}(4) (2007) 549--564.
 


 \bibitem{kuny} B. \`{E}. Kunyavski\u{\i},
 The Bogomolov multiplier of finite simple groups, in {\it   Cohomological and geometric approaches to rationality problems},  209--217, Progr. Math. {\bf 282}, Birkh\" auser Boston, Inc., Boston, MA, 2010.
 


\end{thebibliography}
\end{document}